\nonstopmode \numberwithin{equation}{section}
\newcommand*\bigcdot{\mathpalette\bigcdot@{.5}}
\newcommand*\bigcdot@[2]{\mathbin{\vcenter{\hbox{\scalebox{#2}{$\m@th#1\bullet$}}}}}
\newtheorem{thm}{Theorem}
\newtheorem{lem}{Lemma}
\newtheorem{cor}{Corollary}[section]
\newtheorem{cl}{Claim}
\newtheorem{ca}{Case}
\newtheorem{sca}{Case}
\newtheorem{scl}{Subclaim}
\newtheorem{conj}{Conjecture}
\theoremstyle{definition}
\newtheorem{defn}{Definition}
\newtheorem{op}[equation]{Open Problem}
\newtheorem{ques}[equation]{Question}
\newtheorem{rem}{Remark}[section]
\newtheorem{exam}[equation]{Example}
\newcounter {own}
\def\theown {\thesection       .\arabic{own}}
\newenvironment{pf}[1][]{%
	\vskip 3mm
	\noindent
	\ifthenelse{\equal{#1}{}}%
	{{\slshape Proof. }}%
	{{\slshape #1.} }%
}%
{\qed\bigskip}
\newcounter{alphabet}
\newenvironment{Thm}[1][]{\refstepcounter{alphabet}%
	\noindent%
	{\bf Theorem \Alph{alphabet}}%
	\ifthenelse{\equal{#1}{}}{}{ (#1)}%
	{\bf .} \itshape}{\vskip 8pt}
\newenvironment{Core}[1][]{\refstepcounter{alphabet}%
	\bigskip%
	\noindent%
	{\bf Corollary \Alph{alphabet}}%
	{\bf .} \itshape}{\vskip 8pt}
\newcommand{\B}{{\mathrm B}}
\newcommand{\BB}{{\mathbb B}}
\newcommand{\C}{{\mathcal C}}
\newcommand{\R}{{\mathbb R}}
\newcommand{\Sp}{{\mathbb S^{n-1}}}
\newcommand{\J}{{\mathcal J}}
\newcommand{\I}{{\mathcal I}}
\newcommand{\IN}{{\mathbb N}}
\newcommand{\K}{{\mathcal K}}
\newcommand{\Z}{{\mathbb Z}}
\def\be{\begin{equation}}
	\def\ee{\end{equation}}
\newcommand{\bee}{\begin{enumerate}}
	\newcommand{\eee}{\end{enumerate}}
\newcommand{\blem}{\begin{lem}}
	\newcommand{\elem}{\end{lem}}
\newcommand{\bthm}{\begin{thm}}
	\newcommand{\ethm}{\end{thm}}
\newcommand{\bcor}{\begin{cor}}
	\newcommand{\ecor}{\end{cor}}
\newcommand{\beg}{\begin{exam}}
	\newcommand{\eeg}{\end{exam}}
\newcommand{\begs}{\begin{examples}}
	\newcommand{\eegs}{\end{examples}}
\newcommand{\bdefe}{\begin{defn}}
	\newcommand{\edefe}{\end{defn}}
\newcommand{\bprob}{\begin{prob}}
	\newcommand{\eprob}{\end{prob}}
\newcommand{\bques}{\begin{ques}}
	\newcommand{\eques}{\end{ques}}
\newcommand{\bei}{\begin{itemize}}
	\newcommand{\eei}{\end{itemize}}
\newcommand{\bcon}{\begin{conj}}
	\newcommand{\econ}{\end{conj}}
\newcommand{\bop}{\begin{op}}
	\newcommand{\eop}{\end{op}}
\newcommand{\bca}{\begin{ca}}
	\newcommand{\eca}{\end{ca}}
\newcommand{\bsca}{\begin{sca}}
	\newcommand{\esca}{\end{sca}}
\newcommand{\bcl}{\begin{cl}}
	\newcommand{\ecl}{\end{cl}}
\newcommand{\bscl}{\begin{scl}}
	\newcommand{\escl}{\end{scl}}
\newcommand{\bcons}{\begin{conjs}}
	\newcommand{\econs}{\end{conjs}}
\newcommand{\bprop}{\begin{propo}}
	\newcommand{\eprop}{\end{propo}}
\newcommand{\br}{\begin{rem}}
	\newcommand{\er}{\end{rem}}
\newcommand{\brs}{\begin{rems}}
	\newcommand{\ers}{\end{rems}}
\newcommand{\bo}{\begin{obser}}
	\newcommand{\eo}{\end{obser}}
\newcommand{\bos}{\begin{obsers}}
	\newcommand{\eos}{\end{obsers}}
\newcommand{\bpf}{\begin{pf}}
	\newcommand{\epf}{\end{pf}}
\newcommand{\ba}{\begin{array}}
	\newcommand{\ea}{\end{array}}
\newcommand{\beq}{\begin{eqnarray}}
	\newcommand{\beqq}{\begin{eqnarray*}}
		\newcommand{\eeq}{\end{eqnarray}}
	\newcommand{\eeqq}{\end{eqnarray*}}
\begin{document}
    
\title{Khavinson conjecture for hyperbolic harmonic functions on the unit ball}
\author{Adel Khalfallah}
\address{Department of Mathematics and Statistics, King Fahd University of Petroleum and
	Minerals, Dhahran 31261, Saudi Arabia}\email{khelifa@kfupm.edu.sa}
\author{ Fathi Haggui}
\address{Institut Pr\'eparatoire Aux Etude d’Ing\'enieurs de Monastir (IPEIM), Universit\'e  de Monastir, Tunisia}
\email{fathi.haggui@gmail.com}

\author{ Miodrag Mateljevi\'c}
\address{M. Mateljevi\'c, Faculty of mathematics, University of Belgrade, Studentski Trg 16, Belgrade, Republic of Serbia}
\email{miodrag@matf.bg.ac.rs}
\maketitle

\begin{abstract}
    In this paper, we prove the Khavinson conjecture for hyperbolic harmonic functions on the unit ball. This conjecture was partially solved in \cite{JKM2020}
\end{abstract}

\section{introduction}
For $n\geq 2$, let $\R^n$ denote the $n$-dimensional Euclidean space. We use 
$\BB^n$ and $\Sp$ to denote the unit ball and the unit sphere in $\R^n$, respectively.\\
A mapping $u \in \mathcal{C}^2(\BB^n,\R)$ is said to be hyperbolic harmonic if 
$\Delta_h u=0,$
where $\Delta_h$ is the hyperbolic Laplacian operator defined by
$$\Delta_h u(x)= (1-|x|^2)^2 \Delta u+ 2(n-2)(1-|x|^2) \sum_{i=1}^n x_i \frac{\partial u}{\partial x_i}(x),  $$
here $\Delta$ denotes the Laplacian on $\R^n$.

Clearly for $n=2$, hyperbolic harmonic and harmonic functions coincide.\\

If $\phi\in L^1(\Sp,\mathbb{R})$, we define the  invariant Poisson integral of $\phi$ in $\mathbb{B}^{n}$ 

$$
\mathcal{P}_h[\phi](x)=\int_{\mathbb{S}^{n-1}} \mathcal{P}_h(x,\zeta)\phi(\zeta)d\,\sigma(\zeta),
$$
where
\begin{eqnarray*}
\mathcal{P}_h(x,\zeta)=\left(\frac{1-|x|^2}{|x-\zeta|^{2}}\right)^{n-1}
\end{eqnarray*}
is the Poisson  kernel with respective to $\Delta_{h}$ satisfying
$$
\int_{\mathbb{S}^{n-1}} \mathcal{P}_h(x,\zeta)\,
d\sigma(\zeta)=1.
$$

\noindent For more information about  hyperbolic harmonic functions we refer to  Stoll
 \cite{sto2016} and Burgeth \cite{Burgeth,Burgeth1994}.

\section{Khavinson Problem}

Let $p\in (1,\infty]$ and let $q$ be its conjugate. Assume that $u=\mathcal{P}_h[\phi]$, where $\phi\in L^p(\Sp,\R)$. For $x\in \BB^n\setminus \{0\}$ and $\ell \in \Sp$, 
let $\C_p(x)$ and $\C_p(x;\ell)$ denote the optimal numbers such that 
$$|\nabla u(x)| \leq \C_p(x) \|\phi\|_p, $$
and 
$$| \langle\nabla u(x), \ell \rangle| \leq \C_p(x;\ell) \|\phi\|_p.$$
Since $\displaystyle |\nabla u(x)| = \sup_{\ell \in \Sp} |\langle \nabla u(x),\ell \rangle|$, clearly we obtain 
$$\C_p(x) =\sup_{\ell\ \in \Sp} \C_p(x;\ell).$$
We prove the Khavinson conjecture for hyperbolic harmonic functions, partially solved in \cite{JKM2020}.
\begin{conj} Let $p \in (1,\infty]$, $n \geq 3$ and $x \in \BB^n\setminus \{0\}$. Then
$$
 \C_p(x) = \begin{dcases*}
        \C_p(x;n_x)  & if $1<p< n$, \\
        \C_p(x;t_x) & if $p> n$,
        \end{dcases*}
$$
where $ n_x=\frac{x}{|x|}$, and  $t_x$ is any unit vector such that $\langle t_x, x\rangle =0$.\\
Moreover, if  $p=n$ or $p=\infty$, then $\C_p(x,\ell)$  does not depend on $\ell$. 
\end{conj}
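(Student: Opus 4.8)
The plan is to reduce the problem to a sharp integral inequality involving the gradient of the hyperbolic Poisson kernel, and then analyze the extremal directions via a careful study of that kernel.

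Step 1: Reduction to an integral functional. By duality, for fixed $x \in \BB^n \setminus \{0\}$ and $\ell \in \Sp$, the optimal constant is
$$\C_p(x;\ell) = \left( \int_{\Sp} |\langle \nabla_x \mathcal{P}_h(x,\zeta), \ell\rangle|^q \, d\sigma(\zeta) \right)^{1/q},$$
with the obvious modification $\C_\infty(x;\ell) = \int_{\Sp} |\langle \nabla_x \mathcal{P}_h(x,\zeta),\ell\rangle|\, d\sigma(\zeta)$ when $p=\infty$ (so $q=1$). One computes $\nabla_x \mathcal{P}_h(x,\zeta)$ explicitly: writing $P_h(x,\zeta) = \big((1-|x|^2)/|x-\zeta|^2\big)^{n-1}$, its gradient is a linear combination of $x$ and $\zeta$, namely $\nabla_x \mathcal{P}_h(x,\zeta) = \mathcal{P}_h(x,\zeta)\big(n-1\big)\big(\frac{-2x}{1-|x|^2} - \frac{2(x-\zeta)}{|x-\zeta|^2}\big)$. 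Hence $\langle \nabla_x \mathcal{P}_h(x,\zeta),\ell\rangle$ depends on $\ell$ only through $\langle x,\ell\rangle$ and $\langle \zeta,\ell\rangle$.

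Step 2: Symmetrization and reduction to two parameters. By rotational symmetry we may take $x = r n_x$ with $r = |x| \in (0,1)$, and write $\ell = \cos\alpha \, n_x + \sin\alpha \, t$ for some unit vector $t \perp n_x$; the claim is that the map $\alpha \mapsto \C_p(r n_x; \ell)^q$ is monotone on $[0,\pi/2]$, increasing when $p < n$ (so the max is at $\alpha = 0$, i.e. $\ell = n_x$) and decreasing when $p > n$ (max at $\alpha = \pi/2$, i.e. $\ell = t_x$), with the critical exponent $p = n$ being exactly where the first-variation term at $\alpha$ vanishes identically. Expanding $|\langle \nabla_x \mathcal{P}_h, \ell\rangle|^q$ and integrating over $\Sp$, the cross terms involving odd powers of $\langle \zeta, t\rangle$ vanish, leaving an expression in $\cos^2\alpha$ and $\sin^2\alpha$; differentiating in $\alpha$ reduces everything to comparing two explicit one-dimensional integrals over the colatitude $\theta$ (the angle between $\zeta$ and $n_x$), weighted by powers of $1 - 2r\cos\theta + r^2$.

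Step 3: The sharp one-dimensional inequality. The heart of the matter is to show that a certain integral of the form $\int_0^\pi F(\theta; r, q)\,(\text{something that changes sign once})\,d\theta$ has a definite sign determined by whether $q$ is larger or smaller than the conjugate of $n$. This is the step I expect to be the main obstacle: it requires identifying the correct auxiliary function, exploiting the precise algebraic structure of the Poisson kernel (in particular the exponent $n-1$ and the factor $n-2$ hidden in $\Delta_h$), and likely an integration-by-parts or a clever substitution $s = \cos\theta$ followed by a convexity or monotonicity argument in the parameter. The borderline cases $p = n$ and $p = \infty$ should then follow: for $p=\infty$ ($q=1$) the integrand $|\langle \nabla_x \mathcal{P}_h,\ell\rangle|$ can be handled by a direct sign analysis showing the $\alpha$-derivative is identically zero, and $p = n$ is the algebraic equality case of the Step 3 inequality. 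Finally, combining Steps 1–3 with the relation $\C_p(x) = \sup_\ell \C_p(x;\ell)$ from the excerpt yields the stated formula.
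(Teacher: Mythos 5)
Your overall skeleton --- duality to write $\C_p(x;\ell)$ as an $L^q$ norm of the directional derivative of the kernel, rotational reduction to a single angle $\alpha$, and a claimed monotonicity of $\alpha\mapsto\C_p(x;\ell_\alpha)$ on $[0,\pi/2]$ with the transition at $p=n$ --- matches the paper's strategy. But there are two genuine problems. First, in Step 2 you ``expand'' $|\langle\nabla_x\mathcal{P}_h(x,\zeta),\ell\rangle|^q$, discard cross terms odd in $\langle\zeta,t\rangle$, and claim what remains is an expression in $\cos^2\alpha$ and $\sin^2\alpha$. That expansion is only legitimate when $q$ is an even integer; for general $q=p/(p-1)$ the function $|u\cos\alpha+v\sin\alpha|^q$ is not a polynomial in $u$ and $v$, there are no cross terms to cancel, and the integral is not a linear combination of $\cos^2\alpha$ and $\sin^2\alpha$. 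The paper avoids this by keeping the absolute value intact, differentiating under the integral sign in the angle, and using periodicity and reflection substitutions to reduce the sign of the derivative to the elementary inequality $|\sin(\theta-\gamma)|^b<|\sin(\theta+\gamma)|^b$ for $\theta,\gamma\in(0,\pi/2)$.

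Second, and decisively, Step 3 --- which you yourself flag as the main obstacle --- is the entire content of the theorem, and you do not prove it; you only describe what kind of argument might work. The paper's resolution is concrete: after a M\"obius substitution (Lemma \ref{lem-2.1}) the constant becomes a fixed multiple of $\left(\int_{\Sp}|\eta-x|^{2(n-1)(q-1)}|\langle\eta,\ell\rangle|^{q}\,d\sigma(\eta)\right)^{1/q}$, slice integration on spheres reduces this to the one-dimensional family $\int_{-\pi}^{\pi}(A-B\cos\theta)^{a}|\cos(\theta-\gamma)|^{q}\,d\theta$ with $a=(n-1)(q-1)$, and an explicit computation of the $\gamma$-derivative shows this is decreasing on $[0,\pi/2]$ when $a>1$ (i.e.\ $1<p<n$), increasing when $0<a<1$ (i.e.\ $p>n$), and constant when $a\in\{0,1\}$ (i.e.\ $p=\infty$ or $p=n$). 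Without that lemma or an equivalent, the conjecture is not established. Note also that your Step 1 works directly with $\nabla_x\mathcal{P}_h$, so $\ell$ stays coupled to both $x$ and $\zeta$ through the term $\frac{-2x}{1-|x|^2}-\frac{2(x-\zeta)}{|x-\zeta|^2}$; the M\"obius change of variables is precisely what decouples the weight $|\eta-x|^{2(n-1)(q-1)}$ from the factor $|\langle\eta,\ell\rangle|^{q}$ and makes the monotonicity analysis tractable.
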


In the planar case, i.e., $n=2$, this conjecture was solved by Kalaj and Markovi\'c,  see \cite[Theorem 1.1]{kama}.\\

 Khavinson \cite{khavinson}  obtained a sharp pointwise estimate for the radial derivative of bounded harmonic functions on the unit ball of 
$\R^3$ and  conjectured that the same estimate holds for the norm of the gradient of bounded harmonic functions.\\
For harmonic functions this conjecture was formulated by Kresin and Maz’ya in \cite{kr}  and in \cite{kr2} considered the half-space analogue of the above conjecture. See \cite[Chapter 6]{kr3} for 
various Khavinson-type extremal problems for harmonic functions.\\
 Kalaj \cite{K2017} showed that the conjecture  for $n=4$ and  Melentijević \cite{Mel} confirmed the conjecture in $\R^3$. Markovi\'c \cite{mark} solved the Khavinson problem for points near the boundary of the unit ball.  The general conjecture was recently proved by Liu \cite{liu}.\\

By computing the gradient of the Poisson-Szeg\"o kernel and using the M\"obius transformation as a substitution, we obtain the following integral representation  

\begin{lem}\label{lem-2.1}\cite{JKM2020}
For any $p\in (1,\infty]$, $x\in\mathbb{B}^{n}$ and $l\in \mathbb{S}^{n-1}$,
we have
\begin{equation}\label{eq-1.1}
     \C_p(x;\ell)=\frac{ 2(n-1)}{(1-|x|^2)^{\frac{n(q-1)+1}{q}}} \left(\int_{\mathbb{S}^{n-1}}  |\eta-x|^{2(n-1)(q-1)} |\langle\eta,l\rangle|^{q} \,  d\sigma(\eta)\right)^{\frac{1}{q}}.
\end{equation}
\end{lem}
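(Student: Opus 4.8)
The plan is to compute directly the quantity $\C_p(x;\ell)$, which by definition is the operator norm of the linear functional $\phi \mapsto \langle \nabla \mathcal P_h[\phi](x),\ell\rangle$ on $L^p(\Sp)$. Differentiating under the integral sign,
\[
\langle \nabla \mathcal P_h[\phi](x),\ell\rangle = \int_{\Sp} \langle \nabla_x \mathcal P_h(x,\zeta),\ell\rangle\, \phi(\zeta)\, d\sigma(\zeta),
\]
so by the $L^p$--$L^q$ duality (the supremum of $\int f\phi$ over $\|\phi\|_p\le 1$ equals $\|f\|_q$, using $p\in(1,\infty]$ so that $q\in[1,\infty)$) we get
\[
\C_p(x;\ell) = \left(\int_{\Sp} \big|\langle \nabla_x \mathcal P_h(x,\zeta),\ell\rangle\big|^{q}\, d\sigma(\zeta)\right)^{1/q}.
\]
The first genuine step is therefore to differentiate the hyperbolic Poisson kernel $\mathcal P_h(x,\zeta) = \big((1-|x|^2)/|x-\zeta|^2\big)^{n-1}$ in $x$. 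A routine computation gives
\[
\nabla_x \mathcal P_h(x,\zeta) = (n-1)\left(\frac{1-|x|^2}{|x-\zeta|^2}\right)^{n-1}\left(\frac{-2x}{1-|x|^2} - \frac{2(x-\zeta)}{|x-\zeta|^2}\right),
\]
which one simplifies to $\dfrac{2(n-1)(1-|x|^2)^{n-2}}{|x-\zeta|^{2n}}\big(|\zeta|^2 x - \zeta\big)$, using $|x-\zeta|^2 = |x|^2 - 2\langle x,\zeta\rangle + 1$ and $|\zeta|=1$ to combine the two terms over the common denominator $|x-\zeta|^2(1-|x|^2)$. In particular, for $\zeta\in\Sp$ the bracket is $x-\zeta$, but it is cleaner to keep the homogeneous-looking form before restricting.

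The second step is the change of variables that turns this kernel integral into the stated one. The natural substitution is the Möbius involution $\varphi_x$ of $\BB^n$ swapping $0$ and $x$, restricted to the sphere; it is the map under which the hyperbolic Poisson kernel transforms covariantly, and it is exactly the substitution referred to in the sentence preceding the lemma ("using the Möbius transformation as a substitution"). Writing $\eta = \varphi_x(\zeta)$, one uses the classical identities $1-|\varphi_x(\zeta)|^2 = \dfrac{(1-|x|^2)(1-|\zeta|^2)}{|1-\langle \zeta,x\rangle|^2}$ and the Jacobian $d\sigma(\zeta) = \left(\dfrac{1-|x|^2}{|\eta-x|^2}\right)^{n-1} d\sigma(\eta)$ (the Poisson kernel is the density of the push-forward of $\sigma$), together with the formula for $|x-\zeta|$ in terms of $\eta$. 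Substituting everything, the powers of $(1-|x|^2)$ and $|x-\zeta|$ collect into the single prefactor $2(n-1)(1-|x|^2)^{-\frac{n(q-1)+1}{q}}$ and the integrand becomes $|\eta-x|^{2(n-1)(q-1)}|\langle\eta,\ell\rangle|^q$; the factor $|\langle\eta,\ell\rangle|$ appears because $\langle \varphi_x(\zeta) , \text{(something)}\rangle$ pulls back to $\langle \eta,\ell\rangle$ up to scalars — more precisely one must track that $\langle |\zeta|^2 x - \zeta,\ell\rangle$, after the substitution and after pulling out the conformal factors, is proportional to $\langle\eta,\ell\rangle$.

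I expect the main obstacle to be precisely the bookkeeping in this last point: verifying that the vector part $|\zeta|^2 x - \zeta = x-\zeta$ (on the sphere), when carried through the Möbius substitution, contributes exactly $|\langle\eta,\ell\rangle|$ to the integrand and not some rotated or rescaled variant. This is where one needs the derivative $D\varphi_x$ and its conformal-dilation structure $D\varphi_x(\zeta) = \dfrac{1-|x|^2}{|\zeta - x|^2} \, O_\zeta$ with $O_\zeta$ orthogonal, so that $\langle \nabla_x\mathcal P_h(x,\zeta),\ell\rangle$ transforms into a constant multiple of $\langle \eta, O_\zeta^{-1}\ell\rangle$ — and then one checks the orthogonal part is absorbed correctly, or is harmless because it is precisely the reflection fixing the relevant directions. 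Everything else is careful but routine: the differentiation of a radial-type kernel, standard Möbius identities on $\BB^n$ (for which one may cite Stoll \cite{sto2016}), and the $L^p$ duality. Once the algebra is organized so that all scalar factors are expressed purely in terms of $(1-|x|^2)$ and $|x-\eta|$, the exponent $\frac{n(q-1)+1}{q}$ falls out by collecting $n-1$ from the kernel derivative, $n-1$ from the Jacobian raised to the $q$-th power, and the conformal factors from $|x-\zeta|$, all divided by $q$.
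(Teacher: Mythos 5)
The paper does not actually prove this lemma (it is quoted from \cite{JKM2020} with only the one-line hint ``compute the gradient of the kernel and substitute by a M\"obius transformation''), so your overall strategy --- differentiate under the integral, apply $L^p$--$L^q$ duality, then change variables by the involution $\varphi_x$ --- is indeed the intended route, and the duality step is fine. However, the proof has a genuine gap exactly where you flag ``the main obstacle,'' and in addition the gradient computation you do write down is wrong. Carrying out the differentiation gives
$\nabla_x \mathcal{P}_h(x,\zeta) = -2(n-1)\,(1-|x|^2)^{n-2}|x-\zeta|^{-2n}\,v(\zeta)$ with
$v(\zeta)=|x-\zeta|^2 x+(1-|x|^2)(x-\zeta)$; this bracket is \emph{not} $x-\zeta$ on the sphere (at $x=0$ it equals $-\zeta$ while your formula also gives the wrong sign, and for $x=te_1$, $\zeta=e_2$ the two vectors are not even proportional). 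The identity that actually makes the lemma work, and which your sketch never isolates, is that $|v(\zeta)|=|x-\zeta|^2$ and, more to the point, that the unit vector $v(\zeta)/|x-\zeta|^2$ is precisely $\varphi_x(\zeta)$, i.e.
\begin{equation*}
\nabla_x \mathcal{P}_h(x,\zeta)\;=\;-\,\frac{2(n-1)}{1-|x|^2}\,\mathcal{P}_h(x,\zeta)\,\varphi_x(\zeta),\qquad \zeta\in\Sp .
\end{equation*}
Once this is established, the substitution $\zeta=\varphi_x(\eta)$ with Jacobian $\mathcal{P}_h(x,\eta)\,d\sigma(\eta)$ and the identity $\mathcal{P}_h(x,\varphi_x(\eta))=\mathcal{P}_h(x,\eta)^{-1}$ turn $|\langle\varphi_x(\zeta),\ell\rangle|$ into $|\langle\eta,\ell\rangle|$ \emph{immediately}, because $\varphi_x$ is an involution; the exponent $\frac{n(q-1)+1}{q}$ then drops out from $(1-|x|^2)^{-q+(n-1)(1-q)}$.

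Your proposed mechanism for producing the factor $|\langle\eta,\ell\rangle|$ --- tracking $D\varphi_x(\zeta)$, its conformal factor and an orthogonal part $O_\zeta$, and hoping the rotation ``is absorbed correctly'' --- is not the right mechanism and would not close the gap: the derivative of the boundary map rotates $\ell$ by a $\zeta$-dependent orthogonal matrix, and there is no reason a priori for $\langle w(\zeta),\ell\rangle$ with a general vector field $w$ to pull back to a multiple of $\langle\eta,\ell\rangle$ with the \emph{same} $\ell$. It does so here only because the normalized gradient is literally the point $\varphi_x(\zeta)\in\Sp$, so no derivative of $\varphi_x$ enters at all. Since you neither compute $v(\zeta)$ correctly nor verify this key identity, the argument as written does not yet establish \eqref{eq-1.1}; the fix is the displayed formula above, which is an elementary (if slightly fiddly) algebraic verification using $|\zeta|=1$ and the explicit form of $\varphi_x$ from \cite[p.~7]{sto2016}.
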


Moreover, one can easily deduce the following 

\begin{lem}\cite{JKM2020}\label{lem-2.2}
For any $p\in(1,\infty]$, $x\in\mathbb{B}^{n}$, $l\in \mathbb{S}^{n-1}$ and unitary transformation $A$ in $\mathbb{R}^{n}$,
we have
\begin{equation}
\C_p(x;\ell)=\C_p(Ax;A\ell).
\end{equation}
\end{lem}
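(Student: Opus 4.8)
The plan is to read off the identity directly from the integral representation in Lemma~\ref{lem-2.1}, using only that a unitary map $A$ of $\R^n$ preserves the Euclidean norm, the inner product, the unit sphere $\Sp$, and the surface measure $\sigma$. Let $q$ be the conjugate exponent of $p$. Applying \eqref{eq-1.1} at the point $Ax$ in the direction $A\ell$ gives
\[
\C_p(Ax;A\ell)=\frac{2(n-1)}{(1-|Ax|^2)^{\frac{n(q-1)+1}{q}}}\left(\int_{\Sp}|\eta-Ax|^{2(n-1)(q-1)}\,|\langle\eta,A\ell\rangle|^{q}\,d\sigma(\eta)\right)^{\frac1q}.
\]
Since $A$ is unitary, $|Ax|=|x|$, so the prefactor already coincides with the one in $\C_p(x;\ell)$.

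Next I would make the substitution $\eta=A\xi$ in the integral. Because $A$ maps $\Sp$ bijectively onto itself and $\sigma$ is invariant under orthogonal transformations, $d\sigma(\eta)=d\sigma(\xi)$; moreover $|A\xi-Ax|=|\xi-x|$ and $\langle A\xi,A\ell\rangle=\langle\xi,\ell\rangle$. Feeding these three facts into the integral transforms the right-hand side into precisely the expression \eqref{eq-1.1} defining $\C_p(x;\ell)$, which yields $\C_p(x;\ell)=\C_p(Ax;A\ell)$. The case $p=\infty$, i.e.\ $q=1$, is covered by the same computation with the usual conventions on the exponents.

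A more conceptual rephrasing, which could be mentioned in a line, is that composition with $A$ intertwines the hyperbolic Poisson integral — one has $\mathcal{P}_h(Ax,A\zeta)=\mathcal{P}_h(x,\zeta)$, hence $\mathcal{P}_h[\phi]\circ A=\mathcal{P}_h[\phi\circ A]$ — preserves $L^p(\Sp)$ norms, and satisfies $\nabla(u\circ A)(x)=A^{\mathsf T}(\nabla u)(Ax)$, so that $\langle\nabla(u\circ A)(x),\ell\rangle=\langle\nabla u(Ax),A\ell\rangle$; comparing optimal constants gives the result. There is essentially no obstacle here: the only point requiring a (standard) justification is the rotation invariance of $\sigma$ together with the handling of the degenerate exponent $q=1$, and the argument is otherwise a one-line change of variables in Lemma~\ref{lem-2.1}.
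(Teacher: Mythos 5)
Your argument is correct and is exactly the deduction the paper intends: it states Lemma~\ref{lem-2.2} without proof, remarking only that ``one can easily deduce'' it from the integral representation \eqref{eq-1.1}, and your change of variables $\eta=A\xi$ together with $|Ax|=|x|$, $|A\xi-Ax|=|\xi-x|$, $\langle A\xi,A\ell\rangle=\langle\xi,\ell\rangle$ and the rotation invariance of $\sigma$ is precisely that easy deduction. Nothing further is needed.
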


For $p \in (1,\infty]$ and $\ell \in \Sp$, let

\begin{equation}
    \K_p(x;\ell)= \int_{\mathbb{S}^{n-1}}  |\eta-x|^{2(n-1)(q-1)} |\langle\eta,l\rangle|^{q}\,   d\sigma(\eta).
\end{equation}

So in view of (\ref{eq-1.1}), we have 
\begin{equation}\label{CK}
 \C_p(x;\ell)=\frac{ 2(n-1)}{(1-|x|^2)^{\frac{n(q-1)+1}{q}}} \left( \K_p(x;\ell) \right)^{\frac{1}{q}}.
 \end{equation}
 
 Our main result is the  following theorem solving the Khavinson conjecture for hyperbolic harmonic functions. 
 
 \begin{thm}
 Let $p\in (1,\infty]$, $n \geq 3$ and $x \in \BB^n\setminus \{0\}$. Then  
 $$
 \max_{\ell \in \Sp} \C_p(x;\ell) = \begin{dcases*}
        \C_p(x;n_x)  & if $1<p< n$, \\
        \C_p(x;t_x) & if $p> n$.
        \end{dcases*}
$$

$$
\min_{\ell \in \Sp} \C_p(x;\ell) = \begin{dcases*}
        \C_p(x;t_x)  & if $1<p< n$, \\
        \C_p(x;n_x) & if $p> n$.
        \end{dcases*}
$$
If $p=n$ or $p=\infty$, then $\ell \mapsto \C_p(x;\ell)$ is constant.
 \end{thm}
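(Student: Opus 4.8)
The plan is to reduce the problem to a one–variable monotonicity statement governed by a convexity property. By Lemma~\ref{lem-2.2} (unitary invariance) and a rotation one may assume $x=|x|e_1$, where $e_1=(1,0,\dots,0)$; then $|\eta-x|^2=1-2|x|\eta_1+|x|^2$ depends on $\eta\in\Sp$ only through $\eta_1$. Applying to Lemma~\ref{lem-2.2} the rotations of $\R^n$ fixing $e_1$, together with $\ell\mapsto-\ell$, shows that $\K_p(x;\ell)$ depends on $\ell$ only through $s:=\langle\ell,e_1\rangle^2\in[0,1]$; by \eqref{CK} the same is true of $\C_p(x;\ell)$, and extremizing $\C_p(x;\ell)$ over $\ell\in\Sp$ is equivalent to extremizing $\K_p(x;\ell)$. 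Writing $\ell(\theta)=\cos\theta\,e_1+\sin\theta\,e_2$ and $H(\theta):=\K_p(x;\ell(\theta))$, $\theta\in[0,\tfrac\pi2]$, it therefore suffices to prove that $H$ is strictly decreasing on $[0,\tfrac\pi2]$ when $1<p<n$, strictly increasing when $p>n$, and constant when $p=n$ or $p=\infty$; since $n_x$ and $t_x$ correspond to $e_1$ and $e_2$, this yields the theorem.

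The key preliminary step is the identity, valid with $u_+:=\max\{u,0\}$ and obtained from $|\langle\eta,\ell\rangle|^q=\langle\eta,\ell\rangle_+^q+\langle-\eta,\ell\rangle_+^q$ together with the substitution $\eta\mapsto-\eta$,
\[
\K_p(x;\ell)=\int_{\Sp}\widetilde W(\eta_1)\,\langle\eta,\ell\rangle_+^{\,q}\,d\sigma(\eta),\qquad
\widetilde W(t):=\bigl(1-2|x|t+|x|^2\bigr)^{(n-1)(q-1)}+\bigl(1+2|x|t+|x|^2\bigr)^{(n-1)(q-1)}.
\]
The weight $\widetilde W$ is even, and since every base $1\pm2|x|t+|x|^2$ lies in $[(1-|x|)^2,(1+|x|)^2]\subset(0,\infty)$ for $t\in[-1,1]$, a direct computation of $\widetilde W''$ shows: $\widetilde W$ is strictly convex on $[-1,1]$ iff $(n-1)(q-1)>1$, i.e. $p<n$; strictly concave iff $(n-1)(q-1)<1$, i.e. $p>n$; identically $2(1+|x|^2)$ iff $(n-1)(q-1)=1$, i.e. $p=n$; and identically $2$ iff $q=1$, i.e. $p=\infty$. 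In the last two cases $\K_p(x;\ell)$ is a constant multiple of $\int_{\Sp}\langle\eta,\ell\rangle_+^q\,d\sigma(\eta)$, which is rotation invariant, so $H$ — and hence $\ell\mapsto\C_p(x;\ell)$ — is constant.

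Now assume $p\notin\{n,\infty\}$, so $q>1$ and $\widetilde W$ is strictly convex or strictly concave. All factors being bounded on $\Sp$, one differentiates $H$ under the integral; then the change of variable given by the rotation of $\R^n$ in the $(e_1,e_2)$-plane moves the whole $\theta$-dependence into the argument of $\widetilde W$, and symmetrizing over $\eta_2\mapsto-\eta_2$ (using that the resulting integrand is even in $\eta_2$) yields
\[
H'(\theta)=q\int_{\Sp\cap\{\eta_2>0\}}\Bigl[\widetilde W\bigl(\cos\theta\,\eta_1-\sin\theta\,\eta_2\bigr)-\widetilde W\bigl(\cos\theta\,\eta_1+\sin\theta\,\eta_2\bigr)\Bigr]\,(\eta_1)_+^{\,q-1}\,\eta_2\,d\sigma(\eta).
\]
On the domain of integration $\eta_1>0$ and $\eta_2>0$, so for $\theta\in(0,\tfrac\pi2)$ the numbers $a:=\cos\theta\,\eta_1-\sin\theta\,\eta_2<b:=\cos\theta\,\eta_1+\sin\theta\,\eta_2$ satisfy $a+b=2\cos\theta\,\eta_1>0$, hence $|a|<b$. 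Since an even function that is strictly convex (resp. concave) is strictly increasing (resp. decreasing) on $[0,\infty)$, the bracket $\widetilde W(a)-\widetilde W(b)$ is strictly negative (resp. positive) there, while $(\eta_1)_+^{q-1}\eta_2>0$; thus $H'(\theta)<0$ on $(0,\tfrac\pi2)$ when $p<n$ and $H'(\theta)>0$ when $p>n$. Hence $H$ is strictly monotone on $[0,\tfrac\pi2]$, so its maximum and minimum are at the endpoints $\theta=0$ ($\ell=n_x$) and $\theta=\tfrac\pi2$ ($\ell=t_x$), in the order asserted by the theorem.

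The one step requiring care is the bookkeeping in the formula for $H'(\theta)$: one must verify that after the rotation change of variable and the reflection $\eta_2\mapsto-\eta_2$ the residual factor is exactly $(\eta_1)_+^{q-1}\eta_2$, which is of one sign on $\{\eta_1>0,\eta_2>0\}$ and even in $\eta_2$, so that the sign of $H'$ is pinned down entirely by the convexity or concavity of $\widetilde W$. Everything else — justifying differentiation under the integral by boundedness of the integrand on $\Sp$, and the elementary monotonicity of even convex functions on $[0,\infty)$ — is routine, and the splitting into positive parts sidesteps the more delicate estimates used in the planar case \cite{kama}.
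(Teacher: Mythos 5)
Your proof is correct, and it reaches the same threshold ($a=(n-1)(q-1)$ versus $1$, i.e.\ $p$ versus $n$) by a genuinely different route. The paper first applies slice integration to reduce $\K_p(x;\ell_\gamma)$ to an integral over the disc of the circle integral $\J_q(r,\rho;\gamma)$, and then proves a standalone monotonicity lemma for $\I_{a,b}(\gamma)=\int_{-\pi}^{\pi}(A-B\cos\theta)^a|\cos(\theta-\gamma)|^b\,d\theta$ by differentiating in $\gamma$ and performing a chain of substitutions; the sign of $\I_{a,b}'$ comes from the product of the bracket $(A+B\sin\theta)^{a-1}-(A-B\sin\theta)^{a-1}$ (sign governed by $a-1$) with $|\sin(\theta-\gamma)|^b-|\sin(\theta+\gamma)|^b<0$. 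You instead stay on $\Sp$, use the decomposition $|u|^q=u_+^q+(-u)_+^q$ and the reflection $\eta\mapsto-\eta$ to replace the kernel by the even symmetrized weight $\widetilde W$, and then read off the sign of $H'(\theta)$ from the convexity or concavity of $\widetilde W$ (equivalently, the sign of $a(a-1)$) via the elementary fact that an even strictly convex function increases on $[0,\infty)$; your formula for $H'(\theta)$ and the inequality $|a|<b$ on $\{\eta_1>0,\eta_2>0\}$ are verified correctly, as are the degenerate cases $p=n$ and $p=\infty$ where $\widetilde W$ is constant and rotation invariance finishes the argument. Your version is arguably more conceptual and avoids the two-stage reduction to the disc, at the cost of the $C^1$ bookkeeping for $u\mapsto u_+^q$ (which you handle, since $q>1$ whenever you differentiate); the paper's version isolates a reusable one-dimensional lemma on $\I_{a,b}$ that also delivers the planar case of \cite{kama} as a special instance. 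Both arguments yield the strict monotonicity of the angle dependence on $[0,\pi/2]$ and hence the stated maxima and minima at $n_x$ and $t_x$.
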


One of our main tools is the method of slice integration on spheres.\\

\begin{Thm}\cite[Theorem A.5]{Axler1992}
Let $f$  be a Borel measurable, integrable function on $\Sp$. If $1\leq k<n$, then 
$$
\int_{\Sp} f d\sigma_n  = \frac{k}{n} \frac{V(\BB^k)}{V(\BB^n)}\int_{\BB^{n-k}} (1-|x|^2)^{\frac{k-2}{2}}\int_{\mathbb{S}^{k-1}} f(x,\sqrt{1-|x|^2}\zeta) d\sigma_k(\zeta)\,dV_{n-k}(x),  $$

where  $V(\BB^n)$ denotes the volume of the ball, which is given by
\begin{equation}
 V(\BB^n)= \frac{\pi^{\frac{n}{2}}}{\Gamma(\frac{n}{2}+1)}.
\end{equation}
and $\sigma_n$ denotes the normalized measure on the sphere $\Sp$.
\end{Thm}
We will consider two special cases for $k=n-1$ and $k=n-2$. The corresponding formulas are useful when the integrand function $f$ depends only on one or two variables.

\begin{Core}
Let $\eta=(\eta_1, \ldots,\eta_n)\in \Sp$ and  $f(\eta)$  be a Borel measurable, integrable function on $\Sp$. 
\begin{enumerate}
    \item If $n\geq 2$ and $f(\eta)$ depends only on the first  variable $\eta_1$, then
    \begin{equation}
\int_{\Sp} f(\eta_1) \,d\sigma_n(\eta)  = \frac{n-1}{n} \frac{V(\BB^{n-1})}{V(\BB^n)}\int_{-1}^{1} (1-t^2)^{\frac{n-3}{2}} f(t)\, dt.          
    \end{equation}

\item If $n \geq 3$ and $f(\eta)$ depends only on the first two variables $\eta_1,\eta_2$, then 
\begin{eqnarray}
\int_{\Sp} f(\eta_1,\eta_2) \,d\sigma_n(\eta) & =& \frac{n-2}{2\pi} \int_{\BB^2} (1-|z|^2)^{\frac{n-4}{2}}\, f(z)\, dA(z). \\
 &=& \frac{n-2}{2\pi} \int_{\BB^2} (1-r^2)^{\frac{n-4}{2}}\, f(r \cos \theta, r \sin \theta ) r \, drd\theta,\label{polar}
\end{eqnarray}
\end{enumerate}
where $dA(z)$ denotes the Lebesgue measure on the unit disc $\BB^2$. 

\end{Core}

Using the invariance of $\K_p$ by unitary transformations, see Lemma \ref{lem-2.2}, we may assume that:
  $$x=|x|e_1 \mbox { and } \ell=\ell_\gamma=\cos \gamma e_1+\sin \gamma e_2,
\mbox{ with } \gamma\in [0,\pi].$$

Let $\eta=(\eta_1,\ldots,\eta_n) \in \Sp$. Then
$$|\eta-x|^2=1+|x|^2-2|x|\eta_1, $$
and
$$\langle \eta,\ell_\gamma\rangle= \eta_1 \cos \gamma +\eta_2 \sin \gamma. $$\\
For $r,\rho \in (0,1)$, introduce the notation
\begin{equation}
    \J_q(r, \rho;\gamma)= \int_{-\pi}^{\pi} \left( 1+\rho^2-2\rho r \cos \theta \right)^{(n-1)(q-1)}   \left|   \cos(\theta- \gamma)\right|^q d\theta. 
\end{equation}

\begin{lem} Let $x\in \BB^n$, $1<p<\infty$ and $q$ its conjugate. Then
$$\K_p(x;\ell)=\K_p(|x|e_1;\ell_\gamma)=\frac{n-2}{2\pi} \int_0^1 (1-r^2)^{\frac{n-4}{2}}  r^{q+1}    \J_q(r,|x|;\gamma)\, dr.
$$
\end{lem}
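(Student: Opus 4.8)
The plan is to derive the formula directly from the two–variable slice formula, Part~(2) of the Corollary, after normalizing the configuration. By Lemma~\ref{lem-2.2} we have $\K_p(x;\ell)=\K_p(Ax;A\ell)$ for every unitary $A$. Extending $x/|x|$ to an orthonormal basis so that $Ax=|x|e_1$, and using the remaining freedom in the vectors $e_2,\dots,e_n$ to align the component of $A\ell$ orthogonal to $e_1$ with $e_2$, we obtain $A\ell=\ell_\gamma=\cos\gamma\,e_1+\sin\gamma\,e_2$ for a suitable $\gamma\in[0,\pi]$. This is exactly the first equality $\K_p(x;\ell)=\K_p(|x|e_1;\ell_\gamma)$, and it explains why the quantity depends only on $|x|$ and $\gamma$; it remains to evaluate $\K_p(|x|e_1;\ell_\gamma)$.

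Next I would exploit the identities recorded just above: for $x=|x|e_1$ and $\ell=\ell_\gamma$ one has $|\eta-x|^2=1+|x|^2-2|x|\eta_1$ and $\langle\eta,\ell_\gamma\rangle=\eta_1\cos\gamma+\eta_2\sin\gamma$. Hence, setting
\[
f(z_1,z_2):=\bigl(1+|x|^2-2|x|z_1\bigr)^{(n-1)(q-1)}\,\bigl|z_1\cos\gamma+z_2\sin\gamma\bigr|^{q},
\]
the integrand $|\eta-x|^{2(n-1)(q-1)}|\langle\eta,\ell\rangle|^{q}$ equals $f(\eta_1,\eta_2)$, a Borel function of the two coordinates $\eta_1,\eta_2$ only; since $q-1>0$ and $|\langle\eta,\ell\rangle|\le 1$, it is bounded on $\Sp$, hence integrable. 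Because $n\ge 3$, Part~(2) of the Corollary applies in its polar form (\ref{polar}), giving
\[
\K_p(|x|e_1;\ell_\gamma)=\frac{n-2}{2\pi}\int_0^1\!\!\int_{-\pi}^{\pi}(1-r^2)^{\frac{n-4}{2}}\,f(r\cos\theta,r\sin\theta)\,r\,d\theta\,dr.
\]

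Finally I would simplify $f(r\cos\theta,r\sin\theta)$: the angle–addition formula gives $r\cos\theta\cos\gamma+r\sin\theta\sin\gamma=r\cos(\theta-\gamma)$, so $\bigl|r\cos\theta\cos\gamma+r\sin\theta\sin\gamma\bigr|^{q}=r^{q}\,|\cos(\theta-\gamma)|^{q}$, while the first factor becomes $\bigl(1+|x|^2-2|x|r\cos\theta\bigr)^{(n-1)(q-1)}$. Pulling the $r$–dependent factors out of the $\theta$–integral turns the inner integral into exactly $\J_q(r,|x|;\gamma)=\int_{-\pi}^{\pi}\bigl(1+|x|^2-2|x|r\cos\theta\bigr)^{(n-1)(q-1)}|\cos(\theta-\gamma)|^{q}\,d\theta$, and the surviving power of $r$ is $r\cdot r^{q}=r^{q+1}$, which is the asserted formula. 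I do not expect a genuine obstacle here: the argument is a change of variables followed by matching the definition of $\J_q$. The only points worth a line of care are that Part~(2) of the Corollary requires $n\ge 3$ (the exponent $\tfrac{n-4}{2}$ may be negative, but $(1-r^2)^{(n-4)/2}r$ is still integrable on $(0,1)$, so all integrals converge), and that polar coordinates on $\BB^2$ sweep the full period $[-\pi,\pi]$, which matches the range of integration in the definition of $\J_q$.
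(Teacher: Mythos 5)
Your proposal is correct and follows essentially the same route as the paper: normalize via the unitary invariance of Lemma~\ref{lem-2.2}, apply the two-variable slice-integration formula (\ref{polar}), and use the angle-addition identity $\eta_1\cos\gamma+\eta_2\sin\gamma=r\cos(\theta-\gamma)$ to recognize $\J_q(r,|x|;\gamma)$ and the factor $r^{q+1}$. In fact you supply more detail than the paper, whose proof stops at the substitution without writing out the final simplification.
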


\begin{proof} Using the invariance of $\K_p$ by unitary transformations, we may assume that
  $x=|x|e_1 \mbox { and } \ell=\ell_\gamma=\cos \gamma e_1+\sin \gamma e_2,
\mbox{ with } \gamma\in [0,\pi].$
\begin{equation}
    \K_p(|x|e_1;\ell_\gamma)= \int_{\Sp} \left( 1+|x|^2-2|x|\eta_1 \right)^{(n-1)(q-1)} \left|  \eta_1 \cos \gamma +\eta_2 \sin \gamma \right|^q \, d \sigma(\eta).
\end{equation}

\noindent As the integrand function depends only on $\eta_1$ and $\eta_2$, the method of slice integration on spheres reduces an integral on the sphere to some integral on the unit disc $\BB^2$.  Using polar coordinates on the unit disc, let us denote $\eta_1= r \cos \theta$ and $\eta_2=r \sin \theta$. Thus

$$\langle \eta,\ell_\gamma\rangle= \eta_1 \cos \gamma +\eta_2 \sin \gamma= r \cos(\theta-\gamma). $$
\end{proof}

To find the extreme values of  $\J_q(r,\rho;\gamma)$, we will consider the following  more general integral
\begin{equation}
    \I_{a,b}(\gamma)= \int_{-\pi}^{\pi} (A-B \cos \theta)^a \, |\cos  (\theta-\gamma)|^b\, d\theta.
\end{equation}

The function $\I_{a,b}$ has the following properties
\begin{enumerate}
    \item $\I_{a,b}$ is $\pi$- periodic. 
    \item $\I_{a,b}$ is an even function.
\end{enumerate}
Thus, we will consider the behaviour of $\I_{a,b}$ only on $[0, \frac{\pi}{2}]$ and we show that $\I_{a,b}$ is a monotonic on $[0,\frac{\pi}{2}]$,  thus the extreme values are reached at $\gamma=0$ and  $\gamma=\frac{\pi}{2}$. A special case was considered in \cite[Lemma 2.1]{kama}. 
\begin{lem}
Let $A,B$ be positive  numbers such that $0< B<A$, and $a$, $b$ are real numbers such that $b>0$. 
\begin{enumerate}
    \item If $a=0$ or $a=1$, then $\gamma \mapsto \I(a,b; \gamma)$ is constant.
    \item If $a \in (0,1)$, then $\gamma \mapsto \I(a,b; \gamma)$ is increasing on $[0,\pi/2]$. Thus $$\max_{\gamma \in [0, \pi/2]} \I(a,b;\gamma)=\I(a,b;\pi/2).$$
    \item If $a>1 $, then $\gamma \mapsto \I(a,b; \gamma)$ is decreasing on $[0,\pi/2]$. Thus $$\max_{\gamma \in [0, \pi/2]} \I(a,b;\gamma)=\I(a,b;0).$$
\end{enumerate}
\end{lem}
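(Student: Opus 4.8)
\emph{Proposed proof.} The plan is to reduce everything to a single auxiliary function and one elementary trigonometric comparison. Set $g(u)=(A-u)^a+(A+u)^a$. Since $0<B<A$, for $u\in[-B,B]$ one has $A\pm u\ge A-B>0$, so $g$ is $C^\infty$ on $[-B,B]$ and is an \emph{even} function of $u$. First I would rewrite $\I_{a,b}(\gamma)$ by substituting $\theta\mapsto\theta-\gamma$ (the new integrand is $2\pi$-periodic in $\theta$, so the interval $[-\pi,\pi]$ is preserved), then fold $[-\pi,\pi]$ onto $[-\pi/2,\pi/2]$ using $|\cos(\theta\pm\pi)|=|\cos\theta|$ and $\cos(\theta\pm\pi+\gamma)=-\cos(\theta+\gamma)$. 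This collapses the two halves into the pair $(A-B\cos(\theta+\gamma))^a+(A+B\cos(\theta+\gamma))^a=g\big(B\cos(\theta+\gamma)\big)$, giving
$$\I_{a,b}(\gamma)=\int_{-\pi/2}^{\pi/2}\cos^b\theta\;g\big(B\cos(\theta+\gamma)\big)\,d\theta .$$

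Next I would differentiate in $\gamma$ under the integral sign — legitimate because $g\in C^1$ and the $\gamma$-derivative of the integrand is dominated by a constant times $\cos^b\theta\in L^1(-\tfrac\pi2,\tfrac\pi2)$ — and observe that $g(B\cos(\theta+\gamma))$ depends on $\theta$ and $\gamma$ only through $\theta+\gamma$, so $\partial_\gamma$ may be replaced by $\partial_\theta$. Integrating by parts in $\theta$ then kills the boundary terms, since $b>0$ forces $\cos^b(\pm\tfrac\pi2)=0$, and the resulting integral converges because $\cos^{b-1}\theta\in L^1$ when $b>0$. This yields $\I_{a,b}'(\gamma)=b\int_{-\pi/2}^{\pi/2}\cos^{b-1}\theta\,\sin\theta\;g(B\cos(\theta+\gamma))\,d\theta$, and since $\theta\mapsto\cos^{b-1}\theta\sin\theta$ is odd on $(-\tfrac\pi2,\tfrac\pi2)$ a final folding produces the key identity
$$\I_{a,b}'(\gamma)=b\int_0^{\pi/2}\cos^{b-1}\theta\,\sin\theta\,\Big[g\big(B\cos(\gamma+\theta)\big)-g\big(B\cos(\gamma-\theta)\big)\Big]\,d\theta .$$

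The heart of the argument is then the sign of the bracket for $\gamma\in(0,\tfrac\pi2)$. The weight $\cos^{b-1}\theta\sin\theta$ is positive on $(0,\tfrac\pi2)$, so it is enough to control the bracket pointwise in $\theta\in(0,\tfrac\pi2)$. From $\cos(\gamma-\theta)-\cos(\gamma+\theta)=2\sin\gamma\sin\theta>0$ and $\cos(\gamma-\theta)+\cos(\gamma+\theta)=2\cos\gamma\cos\theta>0$ (both strict on $(0,\tfrac\pi2)^2$) I get $|\cos(\gamma+\theta)|<\cos(\gamma-\theta)=|\cos(\gamma-\theta)|$, and multiplying by $B$ keeps both numbers in $[0,B)$. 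Meanwhile $g'(u)=a\big[(A+u)^{a-1}-(A-u)^{a-1}\big]$ together with the monotonicity of $t\mapsto t^{a-1}$ on $(0,\infty)$ shows that on $[0,B]$ the even function $g$ is constant when $a\in\{0,1\}$, strictly increasing when $a>1$, and strictly decreasing when $a\in(0,1)$. Combining this with the displayed inequality and the evenness of $g$, the bracket is identically $0$ when $a\in\{0,1\}$ (so $\I_{a,b}'\equiv0$, case (1)), is strictly negative for every $\theta\in(0,\tfrac\pi2)$ when $a>1$ (so $\I_{a,b}'<0$ on $(0,\tfrac\pi2)$ and $\I_{a,b}$ is decreasing on $[0,\tfrac\pi2]$, case (3)), and is strictly positive when $a\in(0,1)$ (so $\I_{a,b}'>0$ and $\I_{a,b}$ is increasing on $[0,\tfrac\pi2]$, case (2)). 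The cases $a=0$ (trivial by periodicity) and $a=1$ (expand $A-B\cos\theta$ and note that $\int_{-\pi}^{\pi}\cos\theta\,|\cos(\theta-\gamma)|^b\,d\theta$ is $\gamma$-independent by the same odd-symmetry trick) could alternatively be checked by hand.

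I expect the only genuine obstacle to be this last step: the differentiation under the integral and the integration by parts are routine but do rely on $b>0$, and the whole monotonicity statement is ultimately packaged into the elementary inequality $|\cos(\gamma+\theta)|<|\cos(\gamma-\theta)|$ on $(0,\tfrac\pi2)^2$ together with the sign of $g'$; arranging the computation so that it reduces to exactly this clean comparison is where care is needed. As a cross-check, at $\gamma=0$ and $\gamma=\tfrac\pi2$ the bracket vanishes identically (for $\gamma=0$ because $\cos(\pm\theta)$ agree, for $\gamma=\tfrac\pi2$ because $|\cos(\tfrac\pi2\pm\theta)|=\sin\theta$), consistent with $\I_{a,b}$ being even and symmetric about $\tfrac\pi2$, so that $\I_{a,b}'$ must vanish at both endpoints of $[0,\tfrac\pi2]$.
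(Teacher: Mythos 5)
Your proof is correct, and it reaches the same two sign-definite ingredients as the paper but by a genuinely different mechanism. The paper shifts $\gamma$ into the cosine factor, differentiates $(A-B\cos(\theta+\gamma))^a$ directly (picking up the factor $aB$), shifts back, and then performs two successive split-and-substitute steps to land on
$\I_{a,b}'(\gamma)= a B \int_{0}^{\pi/2}\cos\theta\,[(A+B\sin\theta)^{a-1}-(A-B\sin\theta)^{a-1}]\,[|\sin(\theta-\gamma)|^b-|\sin(\theta+\gamma)|^b]\,d\theta$; all integrands there are bounded, so no convergence issues arise. You instead symmetrize first, packaging the two half-circles into the even function $g(u)=(A-u)^a+(A+u)^a$, and then move the $\gamma$-derivative onto the weight $\cos^b\theta$ by integration by parts. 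That buys a cleaner conceptual reduction --- everything is governed by the sign of $g'$ on $[0,B]$ together with the single inequality $|\cos(\gamma+\theta)|<|\cos(\gamma-\theta)|$ --- and your justification of that inequality (checking that both the sum $2\cos\gamma\cos\theta$ and the difference $2\sin\gamma\sin\theta$ are positive) is actually more complete than the paper's, which only records $\sin(\theta-\gamma)-\sin(\theta+\gamma)<0$ and leaves the absolute values implicit. The price you pay is the mildly singular kernel $\cos^{b-1}\theta$ for $0<b<1$, which forces you to justify the integration by parts by a limiting argument on $[-\pi/2+\varepsilon,\pi/2-\varepsilon]$; you flag this correctly, it is routine since $\cos^{b-1}\theta\sin\theta$ is integrable precisely when $b>0$, but it is a technicality the paper's route avoids entirely. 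Both arguments use $b>0$ only to get strictness of the final comparison (and, in your case, integrability), and both correctly yield constancy for $a\in\{0,1\}$, monotone increase for $a\in(0,1)$, and monotone decrease for $a>1$.
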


\begin{proof}
As the integrand function is $2\pi$ periodic with respect to $\theta$, we deduce that

$$ \I_{a,b}(\gamma)= \int_{-\pi}^{\pi} (A-B \cos (\theta+\gamma))^a \, |\cos  \theta|^b\, d\theta.$$
Therefore the mapping is differentiable and
$$\I_{a,b}'(\gamma)= aB \int_{-\pi}^{\pi}  \sin (\theta+\gamma)(A-B \cos (\theta+\gamma))^{a-1} \, |\cos  \theta|^b\, d\theta.
$$
Again, using the $2\pi$-periodicity of the integrand, we obtain

$$\I_{a,b}'(\gamma)= aB \int_{-\pi}^{\pi}  \sin \theta(A-B \cos \theta)^{a-1} \, |\cos  (\theta-\gamma)|^b\, d\theta.
$$
Next, we split the integral into two parts from $0$ to $\pi$ and from $\pi$ to $2\pi$. Using a substitution, we obtain

$$\I_{a,b}'(\gamma)= aB \int_{0}^{\pi}  \sin\theta \left[(A-B \cos \theta)^{a-1}-(A+B \cos \theta)^{a-1} \right] \, |\cos  (\theta-\gamma)|^b\, d\theta.$$

By considering the substitution $u=\theta -\pi/2$, we get  

$$\I_{a,b}'(\gamma)= a B \int_{-\pi/2}^{\pi/2}  \cos\theta \left[(A+B \sin \theta)^{a-1}-(A-B \sin \theta)^{a-1} \right] \, |\sin  (\theta-\gamma)|^b\, d\theta.$$
Next, we split the integral into two parts from $0$ to $\pi/2$ and from $\pi/2$ to $\pi$. Using the substitution $u=\pi-\theta$, we obtain
$$\I_{a,b}'(\gamma)= a B \int_{0}^{\pi/2}  \cos\theta \left[(A+B \sin \theta)^{a-1}-(A-B \sin \theta)^{a-1} \right] \, \left[ |\sin  (\theta-\gamma)|^b - |\sin  (\theta+\gamma)|^b \right] d\theta.$$

Clearly, if $a=0$ or $a=1$, then $\I_{z,b}'=0$ and the function $\I_{a,b}$ is constant and  $$\I_{a,b}'(0)=\I_{a,b}'(\frac{\pi}{2})=0.$$
For $\gamma,\theta\in (0,\frac{\pi}{2})$ and $b>0$, then 
$$|\sin  (\theta-\gamma)|^b - |\sin  (\theta+\gamma)|^b <0. $$
Indeed,  $\sin (\theta-\gamma)-\sin(\theta+\gamma)= -2 \cos (\theta) \sin (\gamma)<0$.\\
Therefore, 

\begin{enumerate}
    \item if $a\in(0,1)$ and $\gamma \in (0,\frac{\pi}{2})$, then $\I_{a,b}'(\gamma)>0$ and the mapping $\I_{a,b}$ is strictly increasing on $[0,\pi/2]$;
    \item if $a\in(1,\infty)$ and $\gamma \in (0,\frac{\pi}{2})$, then $\I_{a,b}'(\gamma)<0$ and the mapping $\I_{a,b}$ is strictly decreasing on $[0,\frac{\pi}{2}]$.
\end{enumerate}
\end{proof}

As a consequence, we get 
\begin{cor}
Let $r,\rho \in (0,1)$, and $q\geq 1$.  Then the mapping $ \gamma \mapsto \J_q(r, \rho;\gamma)$  on $[0, \frac{\pi}{2}]$ is
\begin{enumerate}
    \item constant for $q=1$ or $q=\frac{n}{n-1}$;
    \item strictly increasing   for $1<q<\frac{n}{n-1}$;
    \item strictly decreasing   for $q>\frac{n}{n-1}$.
\end{enumerate}
\end{cor}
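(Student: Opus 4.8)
The plan is to recognize $\J_q(r,\rho;\gamma)$ as a special instance of the auxiliary integral $\I_{a,b}(\gamma)$ and then invoke the preceding lemma verbatim. Comparing the two definitions, I would set
$$A = 1+\rho^2, \qquad B = 2\rho r, \qquad a = (n-1)(q-1), \qquad b = q,$$
so that $A - B\cos\theta = 1+\rho^2 - 2\rho r\cos\theta$ and hence $\J_q(r,\rho;\gamma) = \I_{a,b}(\gamma)$ identically.

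Next I would verify the hypotheses of the lemma. Since $r,\rho\in(0,1)$, we have $B = 2\rho r > 0$, and $B = 2\rho r < 2\rho \le 1+\rho^2 = A$ by the arithmetic–geometric mean inequality; thus $0 < B < A$. Moreover $b = q \ge 1 > 0$. Therefore the lemma applies with these parameters. The remaining step is purely bookkeeping: translate the trichotomy on $a$ into one on $q$. Because $n\ge 3$, the factor $n-1$ is positive, so $a = 0 \iff q = 1$; $a = 1 \iff q - 1 = \tfrac{1}{n-1} \iff q = \tfrac{n}{n-1}$; $0 < a < 1 \iff 1 < q < \tfrac{n}{n-1}$; and $a > 1 \iff q > \tfrac{n}{n-1}$. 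Substituting these equivalences into cases (1), (2), (3) of the lemma yields exactly the three assertions of the corollary, strictness included.

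I do not expect a genuine obstacle here, since the corollary is a direct specialization of the lemma. The only points worth a line of care are the degenerate exponent $q = 1$ (where $b = q = 1$ is still positive, so the lemma covers it, and $a = 0$ makes the first factor identically $1$, giving a constant trivially) and the requirement $B < A$, which is where the open interval $r,\rho\in(0,1)$ is used and which would fail at the boundary $r = \rho = 1$.
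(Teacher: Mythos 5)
Your proposal is correct and is exactly the argument the paper intends: the corollary is stated as an immediate consequence of the lemma on $\I_{a,b}$, obtained by the substitution $A=1+\rho^2$, $B=2\rho r$, $a=(n-1)(q-1)$, $b=q$, with $0<B<A$ following from $2\rho r<2\rho\le 1+\rho^2$. Your bookkeeping translating the trichotomy on $a$ into the one on $q$ is also accurate, so there is nothing to add.
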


\begin{cor}
Let $p\in (1,\infty]$ and $x\in \BB^n\setminus \{0\}$. Then 
\begin{enumerate}
    \item If $p=n$ or $p=\infty$, then $\gamma \mapsto \K_p(x;\ell_\gamma)$ is constant.
    \item If $p \in (1,n)$, then $$\max_{\gamma\in[0,\pi/2]} \K_p(x;\ell_\gamma)=\K_p(x;\ell_0)=\K_p(x;n_x).$$
    \item If $p \in (n,\infty)$, then $$\max_{\gamma\in[0,\pi/2]} \K_p(x;\ell_\gamma)=\K_p(x;\ell_{\pi/2})=\K_p(x;t_x).$$
\end{enumerate}
\end{cor}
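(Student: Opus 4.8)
The plan is to deduce the statement directly from the integral representation
$$\K_p(x;\ell_\gamma)=\frac{n-2}{2\pi}\int_0^1 (1-r^2)^{\frac{n-4}{2}}\, r^{q+1}\,\J_q(r,|x|;\gamma)\,dr$$
of the preceding Lemma, combined with the monotonicity of $\gamma\mapsto\J_q(r,|x|;\gamma)$ on $[0,\pi/2]$ established in the Corollary just above. The key observation is that the dependence on $\gamma$ is confined to the factor $\J_q(r,|x|;\gamma)$, whereas the weight $w(r):=\frac{n-2}{2\pi}(1-r^2)^{\frac{n-4}{2}}r^{q+1}$ — which is precisely the one already present in the Lemma, hence integrable on $(0,1)$ — is strictly positive there because $n\geq 3$. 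Consequently, integrating against $w$ turns a family of functions that is strictly monotone in $\gamma$ for every fixed $r\in(0,1)$ into a function of $\gamma$ that is strictly monotone in the same sense on $[0,\pi/2]$; this elementary fact is really all that is needed on top of the Corollary.

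First I would restate the hypotheses on $p$ in terms of the conjugate exponent $q=p/(p-1)=1+\tfrac{1}{p-1}$, which is strictly decreasing in $p$ on $(1,\infty]$: thus $q=\tfrac{n}{n-1}\iff p=n$, while $1<q<\tfrac{n}{n-1}\iff p>n$, $q>\tfrac{n}{n-1}\iff 1<p<n$, and $q=1\iff p=\infty$. Using the unitary invariance of $\K_p$ (Lemma \ref{lem-2.2}) I may take $x=|x|e_1$, so that $n_x=e_1=\ell_0$ and $\ell_{\pi/2}=e_2$ is orthogonal to $x$, i.e. $\ell_{\pi/2}=t_x$.

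Then I run the cases. If $p=n$, then $q=\tfrac{n}{n-1}$ and $\gamma\mapsto\J_q(r,|x|;\gamma)$ is constant, hence so is $\gamma\mapsto\K_p(x;\ell_\gamma)$. If $p\in(1,n)$, then $q>\tfrac{n}{n-1}$, so $\gamma\mapsto\J_q(r,|x|;\gamma)$ is strictly decreasing on $[0,\pi/2]$ for each $r$; integrating against $w$ shows $\gamma\mapsto\K_p(x;\ell_\gamma)$ is strictly decreasing on $[0,\pi/2]$, so its maximum there is $\K_p(x;\ell_0)=\K_p(x;n_x)$. If $p\in(n,\infty)$, then $1<q<\tfrac{n}{n-1}$, and the same argument — now with $\J_q$ strictly increasing rather than strictly decreasing — gives that $\gamma\mapsto\K_p(x;\ell_\gamma)$ is strictly increasing on $[0,\pi/2]$, with maximum $\K_p(x;\ell_{\pi/2})=\K_p(x;t_x)$.

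The only case needing a separate word is $p=\infty$ (equivalently $q=1$), which falls outside the hypotheses of the slice-integration Lemma as stated; here I would argue directly, noting that $\K_\infty(x;\ell)=\int_{\Sp}|\langle\eta,\ell\rangle|\,d\sigma(\eta)$ is independent of $\ell$ by the rotation invariance of the normalized surface measure on $\Sp$ (alternatively, the derivation of the slice formula goes through verbatim with $q=1$, giving $\J_1\equiv 4$ and a $\gamma$-independent value). I do not anticipate any genuine obstacle: all the analytic content has already been carried out in the Corollary on $\J_q$, and what remains is the routine passage from pointwise strict monotonicity of the integrand to strict monotonicity of the weighted integral.
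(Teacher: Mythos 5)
Your proposal is correct and is essentially the proof the paper intends: the corollary is stated as an immediate consequence of the integral representation of $\K_p$ and the monotonicity of $\gamma\mapsto\J_q(r,|x|;\gamma)$, obtained by integrating against the positive weight $\frac{n-2}{2\pi}(1-r^2)^{\frac{n-4}{2}}r^{q+1}$ and translating the conditions on $q$ back to conditions on $p$. Your separate treatment of $p=\infty$ (where the representation lemma is stated only for $1<p<\infty$) is a small but legitimate point of care that the paper glosses over, handled correctly via rotation invariance of $\sigma$.
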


Thus we obtain our main theorem.

\section{Computation of $\C_p(x)$}

We will start with two particular cases $p=n$ or $p=\infty$.
\subsection{Case $p=\infty$}\hfill

In this case, the mapping  $\gamma \mapsto \K_\infty(x,\ell_\gamma)$ is constant, hence $\K_\infty(x,\ell_\gamma)=\K_\infty(x,\ell_0)$, and 
\begin{eqnarray}
\K_\infty(x,\ell_0)&=&\int_{\Sp} |\eta_1|\, d\sigma(\eta)\nonumber\\
 &=& \frac{n-1}{n} \frac{V(\BB^{n-1})}{V(\BB^n)}\int_{-1}^{1} (1-t^2)^{\frac{n-3}{2}} |t|\, dt\nonumber\\   
  &=&  \frac{2(n-1)}{n} \frac{V(\BB^{n-1})}{V(\BB^n)}\int_{0}^{1} (1-t^2)^{\frac{n-3}{2}} t\, dt.   
\end{eqnarray}
Using the substitution $u=1-t^2$, we deduce
\begin{eqnarray}
\K_\infty(x,\ell_0)&=&\frac{(n-1)}{n} \frac{V(\BB^{n-1})}{V(\BB^n)}\int_{0}^{1} u^{\frac{n-3}{2}} \, du \nonumber\\
&=& \frac{2}{n} \frac{V(\BB^{n-1})}{V(\BB^n)}.
\end{eqnarray}
 By (\ref{CK}), we have
 \begin{eqnarray}
\C_\infty(x;\ell) &=& \frac{2(n-1)}{1-|x|^2} \K_\infty(x;\ell)\nonumber\\
 &=& \frac{4(n-1)}{n} \frac{V(\BB^{n-1})}{V(\BB^n)} \frac{1}{1-|x|^2}\nonumber\\
 &=& \frac{2(n-1) \Gamma(\frac{n}{2})}{\sqrt{\pi}\Gamma({\frac{n}{2}+1})}\frac{1}{1-|x|^2}.
 \end{eqnarray}

Hence, if $u=\mathcal{P}_h[\phi]$, where $\phi\in L^\infty(\Sp,\R)$, then 

\begin{equation}\label{3.4.}
   |\nabla u(x)| \leq \frac{2(n-1) \Gamma(\frac{n}{2})}{\sqrt{\pi}\Gamma({\frac{n}{2}+1})}\frac{1}{1-|x|^2} \|\phi\|_\infty. \\
\end{equation}
 
We should mention that the sharp inequality (\ref{3.4.}) can  also be  obtained  as follows. According to \cite[Corollary 1.2]{kmm}, see also \cite{Burgeth}, if $u$ is a bounded  hyperbolic harmonic function, with $|u|<1$, then
\begin{equation}\label{3.5.}
     |\nabla u(0)| \leq \frac{4(n-1)}{n} \frac{V(\BB^{n-1})}{V(\BB^n)} 
\end{equation}
 
Let $x\in \BB^n$ and  $\varphi_x$ be the M\"obius transformation such that $\varphi_x(0)=x$, see \cite[p. 7 (2.1.4)]{sto2016}. By the M\"obius invariance of $\Delta_h$, the function $u\circ \varphi_x$ is also bounded hyperbolic harmonic function with $\nabla( u\circ \varphi_x)(0)=-(1-|x|^2)\nabla u(x)$. Hence (\ref{3.4.}) follows by considering $u\circ \varphi_x$ in (\ref{3.5.}).
 
 \subsection{Case $p=n$}\hfill
 
 The conjugate of $n$ is $q= \frac{n}{n-1}$.
 \begin{eqnarray}
\J_q(r,\rho;\gamma) &=& \int_{-\pi}^{\pi} \left( 1+\rho^2-2\rho r \cos \theta \right)   \left|   \cos(\theta- \gamma)\right|^q d\theta. \nonumber\\
&=& (1+\rho^2) \int_{-\pi}^{\pi}|\cos \theta|^q \, d\theta\nonumber\\
&=& 4 (1+\rho^2) \int_{0}^{\pi/2}\cos^q \theta \, d\theta.
\end{eqnarray}

Using 
\begin{eqnarray}
\K_n(x;\ell_\gamma) &=&\frac{n-2}{2\pi} \int_0^1 (1-r^2)^{\frac{n-4}{2}}  r^{q+1}    \J_q(r,|x|;\gamma)\, dr\nonumber\\
&=& \frac{n-2}{2\pi} \left( \int_0^1 (1-r^2)^{\frac{n-4}{2}}  r^{q+1} \,dr \right) \left(4 (1+|x|^2) \int_{0}^{\pi/2}\cos^q \theta \, d\theta \right)\nonumber\\
&=& \frac{n-2}{\pi} \left(\int_0^1 (1-r)^{\frac{n-4}{2}} r^{\frac{q}{2}}\, dr \right) \left(  \int_{0}^{\pi/2}\cos^q \theta \, d\theta \right) (1+|x|^2).
\end{eqnarray}

Recall some properties of the beta function. Let $a,b>0$ 
\begin{equation}\label{beta:2.6}
    \B(a,b)=\int_0^1 t^{a-1} (1-t)^{b-1} \, dt=2\int_0^{\pi/2} (\sin t)^{2a-1} (\cos t)^{2b-1} \, dt =\frac{\Gamma(a)\Gamma(b)}{\Gamma(a+b)}.  
\end{equation}
Therefore,
\begin{eqnarray}
  \K_n(x;\ell_\gamma)&=& \frac{n-2}{2\pi} \B(\frac{n}{2}-1,\frac{q}{2}+1)\B(\frac{1+q}{2},\frac{1}{2}) (1+|x|^2)\nonumber\\
  &=& \frac{n-2}{2\sqrt{\pi}}\frac{\Gamma(\frac{n}{2}-1)\Gamma(\frac{q+1}{2})}{\Gamma(\frac{n+q}{2})} (1+|x|^2).
 \end{eqnarray}

\subsection{Case $1<p<n$}
The following lemma is useful to compute  $\K_p(x;\ell_0)$.
\begin{lem}\label{lemma5} For $a>-1$ and $b>-1$
\begin{enumerate}
    \item $\int_{-1}^1 |t|^a(1-t^2)^b dt=\B(\frac{a+1}{2},b+1).$
\item  If $|u|<1$ and $\alpha \in \R$, then 
$$\int_{-1}^1 (1-ut)^{-\alpha} |t|^a(1-t^2)^b  dt=\B(\frac{a+1}{2},b+1)\,\, {}_3F_2(\frac{\alpha}{2},\frac{\alpha+1}{2},\frac{a+1}{2};\frac{1}{2},\frac{a+3}{2}+b;u^2).
$$
\end{enumerate}
\end{lem}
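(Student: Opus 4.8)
The plan is to derive both identities from the Euler integral for the Beta function, treating (1) as the elementary building block and obtaining (2) from it as a ``generating function'' identity after expanding the binomial factor.

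\emph{Part (1).} Since $t\mapsto |t|^a(1-t^2)^b$ is even, I would write
$$\int_{-1}^1 |t|^a(1-t^2)^b\,dt = 2\int_0^1 t^a(1-t^2)^b\,dt,$$
and substitute $s=t^2$, $dt=\tfrac12 s^{-1/2}\,ds$, which turns the right-hand side into $\int_0^1 s^{(a-1)/2}(1-s)^b\,ds=\B\!\left(\tfrac{a+1}{2},b+1\right)$ by \eqref{beta:2.6}. The hypotheses $a>-1$ and $b>-1$ are precisely what is needed for convergence at $t=0$ and at $t=\pm1$.

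\emph{Part (2).} For $|u|<1$ and $|t|\le 1$ we have $|ut|<1$, so I would expand $(1-ut)^{-\alpha}=\sum_{k\ge0}\frac{(\alpha)_k}{k!}(ut)^k$. Since $\int_{-1}^1 |t|^{k+a}(1-t^2)^b\,dt=\B\!\left(\tfrac{k+a+1}{2},b+1\right)$ stays bounded in $k$ and $\sum_{k\ge0}\frac{|(\alpha)_k|}{k!}|u|^k<\infty$ (the general term is $O(k^{\operatorname{Re}\alpha-1}|u|^k)$), Fubini's theorem permits term-by-term integration:
$$\int_{-1}^1 (1-ut)^{-\alpha}|t|^a(1-t^2)^b\,dt = \sum_{k\ge0}\frac{(\alpha)_k}{k!}\,u^k\int_{-1}^1 t^k|t|^a(1-t^2)^b\,dt.$$
The odd-index terms vanish by symmetry, and for $k=2m$ part (1), applied with $a$ replaced by $2m+a$, gives $\int_{-1}^1 |t|^{2m+a}(1-t^2)^b\,dt=\B\!\left(m+\tfrac{a+1}{2},b+1\right)$.

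It then remains to recast the surviving series in hypergeometric form. Using $\B(m+c,d)=\B(c,d)\,(c)_m/(c+d)_m$ with $c=\tfrac{a+1}{2}$ and $d=b+1$ (so $c+d=\tfrac{a+3}{2}+b$), together with the Pochhammer duplication formulas $(\alpha)_{2m}=4^m(\tfrac{\alpha}{2})_m(\tfrac{\alpha+1}{2})_m$ and $(2m)!=(1)_{2m}=4^m\,m!\,(\tfrac12)_m$, the coefficient of $u^{2m}$ becomes $\B\!\left(\tfrac{a+1}{2},b+1\right)\dfrac{(\frac{\alpha}{2})_m(\frac{\alpha+1}{2})_m(\frac{a+1}{2})_m}{(\frac12)_m(\frac{a+3}{2}+b)_m\,m!}$, and summing over $m$ (with $u^{2m}=(u^2)^m$) yields exactly $\B\!\left(\tfrac{a+1}{2},b+1\right){}_3F_2\!\left(\tfrac{\alpha}{2},\tfrac{\alpha+1}{2},\tfrac{a+1}{2};\tfrac12,\tfrac{a+3}{2}+b;u^2\right)$; note $\tfrac{a+3}{2}+b>0$ under the hypotheses, so the ${}_3F_2$ is well defined and converges for $|u|<1$. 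The only genuinely delicate point is the justification of the interchange of summation and integration; the degenerate cases in which $\alpha$ is a non-positive integer (where both the binomial series and the ${}_3F_2$ terminate) require no separate treatment, and the Pochhammer manipulations are routine.
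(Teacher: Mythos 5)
Your proposal is correct and follows essentially the same route as the paper: part (1) reduces to the standard Beta integral by a change of variables (you use $s=t^2$, the paper uses $t=\sin\theta$, both landing on \eqref{beta:2.6}), and part (2) expands $(1-ut)^{-\alpha}$ as a binomial series, integrates term by term, drops the odd terms by symmetry, and uses the duplication identities $(\alpha)_{2m}=4^m(\tfrac{\alpha}{2})_m(\tfrac{\alpha+1}{2})_m$ and $(2m)!=4^m m!(\tfrac12)_m$ to assemble the ${}_3F_2$. Your Fubini justification is in fact slightly more careful than the paper's appeal to uniform convergence, but the argument is the same.
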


\begin{proof}
(1) Using the substitution $t=\sin \theta$ and (\ref{beta:2.6}), we get
\begin{eqnarray}
\int_{-1}^1|t|^a(1-t^2)^b dt&=&2\int_{0}^1t^a(1-t^2)^b dt\nonumber\\
&=&2\int_0^{\pi/2}(\sin \theta)^a(\cos \theta)^{2b+1}d\theta\nonumber\\
&=& \B(\frac{a+1}{2},b+1).\nonumber
\end{eqnarray}
 (2) Since $|ut|\le |u|<1$, for $|t| \leq 1$, we have 
\[(1-ut)^{-\alpha}=\sum_{k=0}^\infty \frac{(\alpha)_k}{k!}u^kt^k,\] and this series converges uniformly in $[-1,1]$, thus 
\begin{eqnarray}
\int_{-1}^1 (1-ut)^{-\alpha}|t|^a(1-t^2)^b\, dt&=&
\sum_{k=0}^{+\infty}
\int_{-1}^1\frac{(\alpha)_k}{k!}u^kt^k|t|^a(1-t^2)^b \, dt\nonumber\\
&=&\sum_{k=0}^{+\infty}
\int_{-1}^1\frac{(\alpha)_{2k}}{(2k)!} u^{2k}t^{2k}|t|^a(1-t^2)^b\, dt\nonumber\\
&=&\sum_{k=0}^{+\infty}
\frac{(\alpha)_{2k}}{(2k)!}\left(\int_{-1}^1t^{2k}|t|^a(1-t^2)^b\, dt\right)u^{2k}\label{2.9}.
\end{eqnarray}

Using Lemma \ref{lemma5} (1), we have 
$\int_{-1}^1t^{2k}|t|^a (1-t^2)^b dt =\frac{\Gamma(k+\frac{a+1}{2})\Gamma(b+1)}{\Gamma(k+\frac{a+3}{2}+b)}.$\\
Thus
$$\sum_{k=0}^{+\infty}
\frac{(\alpha)_{2k}}{(2k)!}\left(\int_{-1}^1t^{2k}|t|^a(1-t^2)^b dt\right)u^{2k}=\sum_{k=0}^{+\infty}
\frac{(\alpha)_{2k}}{(2k)!}\frac{\Gamma(k+\frac{a+1}{2})\Gamma(b+1)}{\Gamma(k+\frac{a+3}{2}+b)}u^{2k}.$$

Using 
$$(\alpha)_{2k}=2^{2k}(\frac{\alpha}{2})_k(\frac{\alpha+1}{2})_k,$$ 
$$(2k)!=\Gamma(2(k+1/2))=\frac{2^{2k}\Gamma(k+1/2)k!}{\sqrt{\pi}}=2^{2k}(1/2)_k k!,$$
and 

$$(x)_k=\frac{\Gamma(x+k)}{\Gamma(x)}, \mbox{ for } x\in \R \setminus \Z_-,$$  we get 

\begin{equation}
  \frac{(\alpha)_{2k}}{(2k)!}=\frac{(\frac{\alpha}{2})_k  (\frac{\alpha+1}{2})_k}{(\frac{1}{2})_k\, k!}.  
\end{equation}

\noindent Therefore
\begin{eqnarray*}
\sum_{k=0}^{+\infty}
\frac{(\alpha)_{2k}}{(2k)!}\left(\int_{-1}^1t^{2k}|t|^a(1-t^2)^b dt\right)u^{2k}&=&\sum_{k=0}^{+\infty}
\frac{(\frac{\alpha}{2})_k(\frac{\alpha+1}{2})_k}{(1/2)_k k!}\frac{\Gamma(k+\frac{a+1}{2})\Gamma(b+1)}{\Gamma(k+\frac{a+3}{2}+b)}u^{2k}\\
&=& \frac{\Gamma(\frac{a+1}{2})\Gamma(b+1)}
{\Gamma(\frac{a+3}{2}+b)}\sum_{k=0}^{+\infty}
\frac{(\frac{\alpha}{2})_k(\frac{\alpha+1}{2})_k(\frac{a+1}{2})_k}{(1/2)_k(\frac{a+3}{2}+b)_kk!}u^{2k}\\
&=&\B(\frac{a+1}{2},b+1) \,\, {}_3F_2(\frac{\alpha}{2},\frac{\alpha+1}{2},\frac{a+1}{2};\frac{1}{2},\frac{a+3}{2}+b;u^2).
\end{eqnarray*}
\end{proof}

Recall that 
\[\K_p(x;\ell_0)= \int_\Sp |\eta-x|^{2(n-1)(q-1)}\left|\langle\eta,\ell\rangle\right|^q\, d \sigma(\eta),\]
with  $\ell_0=e_1=\frac{x}{|x|};\, x=|x|e_1$.\\
We have $|\eta-x|^2=1+|x|^2-2|x|\eta_1$ and $\langle\eta,\ell_0\rangle=|\eta_1|.$\\
Let $\alpha=(n-1)(1-q)$ and $u=\frac{2|x|}{1+|x|^2}$.
$$\begin{array}{ccl}
    \K_p(x;\ell_0)&=&\int_\Sp (1+|x|^2-2|x|\eta_1)^{-\alpha}|\eta_1|^q\, d \sigma(\eta)\\
     &=&(1+|x|^2)^{-\alpha}\int_\Sp\left(1-2\frac{|x|}{1+|x|^2}\eta_1\right)^{-\alpha}|\eta_1|^q \, d \sigma(\eta)\\ &=& (1+|x|^2)^{-\alpha}\int_\Sp\left(1-u\eta_1\right)^{-\alpha}|\eta_1|^q \, d \sigma(\eta).\end{array}$$
As we integrate a function which depends on one variable on $\Sp$, then by the slice integration on spheres, we have 
$$\int_\Sp\left(1-u\eta_1\right)^{-\alpha}|\eta_1|^q \, d \sigma(\eta)=\frac{n-1}{n}\frac{V(\BB^{n-1})}{V(\BB^n)}\int_{-1}^1 (1-ut)^{-\alpha}|t|^q(1-t^2)^\frac{n-3}{2}dt.$$

By Lemma \ref{lemma5} (2), it yields
\[\begin{array}{l}
\int_{-1}^1 (1-ut)^{(n-1)(q-1)}|t|^q(1-t^2)^\frac{n-3}{2}dt\\= \frac{\Gamma(\frac{q+1}{2})\Gamma(\frac{n-1}{2})}
{\Gamma(\frac{q+n}{2})}\, {}_3F_2\left(\frac{(n-1)(1-q)}{2},\frac{(n-1)(1-q)+1}{2},\frac{q+1}{2};\frac{1}{2},\frac{q+n}{2};\frac{4|x|^2}{(1+|x|^2)^2}\right).
\end{array}\]

Therefore
\[\begin{array}{l}
      \int_\Sp\left(1-u\eta_1\right)^{(n-1)(q-1)}|\eta_1|^q \, d \sigma(\eta)  \\ =\frac{n-1}{n}\frac{V(\BB^{n-1})}{V(\BB^n)}\frac{\Gamma(\frac{q+1}{2})\Gamma(\frac{n-1}{2})}
{\Gamma(\frac{q+n}{2})}\, {}_3F_2\left(\frac{(n-1)(1-q)}{2},\frac{(n-1)(1-q)+1}{2},\frac{q+1}{2};\frac{1}{2},\frac{q+n}{2};\frac{4|x|^2}{(1+|x|^2)^2}\right)\\
=\frac{2}{n\sqrt{\pi}}\frac{\Gamma(\frac{q+1}{2})\Gamma(\frac{n}{2} +1)}
{\Gamma(\frac{q+n}{2})}\, {}_3F_2\left(\frac{(n-1)(1-q)}{2},\frac{(n-1)(1-q)+1}{2},\frac{q+1}{2};\frac{1}{2},\frac{q+n}{2};\frac{4|x|^2}{(1+|x|^2)^2}\right)\\
=\frac{1}{\sqrt{\pi}}\frac{\Gamma(\frac{q+1}{2})\Gamma(\frac{n}{2} )}
{\Gamma(\frac{q+n}{2})}\, {}_3F_2\left(\frac{(n-1)(1-q)}{2},\frac{(n-1)(1-q)+1}{2},\frac{q+1}{2};\frac{1}{2},\frac{q+n}{2};\frac{4|x|^2}{(1+|x|^2)^2}\right).\end{array}\]

\noindent Finally,
\[\begin{array}{l}
  \K_p(x;\ell_0)\\=(1+|x|^2)^{(n-1)(q-1)} \frac{\Gamma(\frac{q+1}{2})\Gamma(\frac{n}{2} )}
{\Gamma(\frac{q+n}{2}) \sqrt{\pi}}\, {}_3F_2\left(\frac{(n-1)(1-q)}{2},\frac{(n-1)(1-q)+1}{2},\frac{q+1}{2};\frac{1}{2},\frac{q+n}{2};\frac{4|x|^2}{(1+|x|^2)^2}\right).\end{array}\]

\subsection{Case $p>n$}
\noindent First, we will need the following lemma

\begin{lem}
Let $p\in \IN$  and $q$ be a positive real number and $n \geq 3$. Then
\begin{eqnarray}
    &(1)&\int_\Sp \eta_1^{2p+1} |\eta_2|^q\, d \sigma(\eta)=0.\\
    &(2)&\int_\Sp \eta_1^{2p} |\eta_2|^q d \sigma(\eta)=\frac{n-2}{\pi}\, \B(\frac{n}{2}-1,p+\frac{q}{2}+1)\, \B(p+\frac{1}{2},\frac{q+1}{2}).
\end{eqnarray}    
\end{lem}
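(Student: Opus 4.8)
The plan is to evaluate both integrals by the slice integration formula for $k=n-2$, i.e.\ the second part of the Corollary, which reduces integrals over $\Sp$ of functions of the first two coordinates to integrals over the unit disc $\BB^2$. Writing $\eta_1=r\cos\theta$, $\eta_2=r\sin\theta$ with $r\in(0,1)$, $\theta\in(-\pi,\pi]$, formula \eqref{polar} gives
$$\int_\Sp \eta_1^{m}|\eta_2|^q\,d\sigma(\eta)=\frac{n-2}{2\pi}\int_0^1(1-r^2)^{\frac{n-4}{2}}r^{m+q+1}\,dr\int_{-\pi}^{\pi}(\cos\theta)^m|\sin\theta|^q\,d\theta,$$
so the double integral factors as a product of a one-dimensional radial integral and a one-dimensional angular integral. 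This is the key structural observation; everything else is identification of the two factors with Beta functions.

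For part (1), take $m=2p+1$ odd. The angular factor $\int_{-\pi}^{\pi}(\cos\theta)^{2p+1}|\sin\theta|^q\,d\theta$ vanishes: the substitution $\theta\mapsto\pi-\theta$ fixes $|\sin\theta|$ but sends $\cos\theta\mapsto-\cos\theta$, hence $(\cos\theta)^{2p+1}\mapsto-(\cos\theta)^{2p+1}$, so the integral equals its own negative. (Equivalently, split $(-\pi,\pi]$ into the four quadrants and pair them up.) Thus the whole expression is $0$.

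For part (2), take $m=2p$ even. The radial factor is $\int_0^1(1-r^2)^{\frac{n-4}{2}}r^{2p+q+1}\,dr$; the substitution $s=r^2$ turns it into $\tfrac12\int_0^1(1-s)^{\frac{n-4}{2}}s^{p+\frac{q}{2}}\,ds=\tfrac12\B\!\left(p+\tfrac{q}{2}+1,\tfrac{n}{2}-1\right)$, using the integral form of $\B$ in \eqref{beta:2.6} and $\B(a,b)=\B(b,a)$. The angular factor is $\int_{-\pi}^{\pi}(\cos\theta)^{2p}|\sin\theta|^q\,d\theta=4\int_0^{\pi/2}(\cos\theta)^{2p}(\sin\theta)^q\,d\theta=2\B\!\left(\tfrac{q+1}{2},p+\tfrac12\right)$, again by the trigonometric form of \eqref{beta:2.6} (with $2a-1=q$, $2b-1=2p$) together with the evenness in $\cos$ and $|\sin|$ that makes the four quadrants contribute equally. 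Multiplying, $\frac{n-2}{2\pi}\cdot\tfrac12\B\!\left(p+\tfrac{q}{2}+1,\tfrac{n}{2}-1\right)\cdot 2\B\!\left(\tfrac{q+1}{2},p+\tfrac12\right)=\frac{n-2}{\pi}\B\!\left(\tfrac{n}{2}-1,p+\tfrac{q}{2}+1\right)\B\!\left(p+\tfrac12,\tfrac{q+1}{2}\right)$, which is the claimed identity.

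There is no real obstacle here; the only points requiring a word of care are checking that the exponents satisfy the convergence hypotheses $a,b>-1$ needed to invoke \eqref{beta:2.6} (for the radial integral one needs $\frac{n-4}{2}>-1$, i.e.\ $n\ge3$, which is assumed, and $p+\frac{q}{2}>-1$, automatic since $p\ge0$, $q>0$; for the angular integral one needs $q>-1$, automatic), and getting the symmetry argument for the vanishing in (1) stated cleanly. The bookkeeping of the constant $\frac{n-2}{2\pi}$ against the factors of $2$ and $\tfrac12$ coming from the two substitutions is the one place to be attentive, but it is routine.
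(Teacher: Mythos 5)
Your approach is exactly the paper's: apply the slice-integration formula \eqref{polar} for functions of $(\eta_1,\eta_2)$, separate the radial and angular factors, kill the odd case by the symmetry $\theta\mapsto\pi-\theta$, and identify the two remaining factors as Beta integrals via \eqref{beta:2.6}. Your two intermediate evaluations are correct: the radial integral equals $\tfrac12\B\bigl(p+\tfrac q2+1,\tfrac n2-1\bigr)$ and the angular integral equals $2\B\bigl(\tfrac{q+1}{2},p+\tfrac12\bigr)$. The one genuine error is the final multiplication: $\frac{n-2}{2\pi}\cdot\tfrac12\cdot 2=\frac{n-2}{2\pi}$, not $\frac{n-2}{\pi}$, so what your computation actually establishes is
$$\int_\Sp \eta_1^{2p}|\eta_2|^q\,d\sigma(\eta)=\frac{n-2}{2\pi}\,\B\Bigl(\tfrac n2-1,\,p+\tfrac q2+1\Bigr)\,\B\Bigl(p+\tfrac12,\,\tfrac{q+1}{2}\Bigr).$$
This is not a defect of your argument but of the lemma as stated (the paper's own proof carries the same spurious factor of $2$): for $n=3$, $p=0$, $q=2$ one has $\int_{\mathbb{S}^{2}}\eta_2^2\,d\sigma=\tfrac13$ by symmetry, and indeed $\frac{1}{2\pi}\,\B(\tfrac12,2)\,\B(\tfrac12,\tfrac32)=\frac{1}{2\pi}\cdot\frac43\cdot\frac{\pi}{2}=\frac13$, whereas the constant $\frac{n-2}{\pi}$ would give $\frac23$. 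So correct your last line rather than forcing agreement with the displayed formula; part (1) and everything else in your write-up is fine.
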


\begin{proof}
Using (\ref{polar}) we have
\begin{eqnarray}
 \int_\Sp \eta_1^k |\eta_2|^q d \sigma(\eta)=\frac{n-2}{2\pi}\left(  \int_0^1 (1-r^2)^{\frac{n-4}{2}}r^{k+q+1}\, dr  \right)\left(\int_0^{2\pi}  \cos^k \theta |\sin \theta |^q \, d \theta \right).\nonumber
    \end{eqnarray}
    One can check that $\int_0^{2\pi}\cos^k \theta |\sin \theta |^q \, d \theta=0$ if $k$ is odd and if $k=2p$, then by (\ref{beta:2.6}), we get 
     \begin{eqnarray}
      \int_\Sp \eta_1^{2p} |\eta_2|^q d \sigma(\eta) &=& \frac{n-2}{\pi} \B(\frac{n}{2}-1,p+\frac{q}{2}+1) \B(p+\frac{1}{2},\frac{q+1}{2})\nonumber\\
      &=& \frac{n-2}{\pi}\,\Gamma(\frac{n-2}{2})\, \Gamma(\frac{q}{2}+1) \frac{\Gamma(p+\frac{1}{2})}{\Gamma(p+\frac{q+n}{2})}.\nonumber
     \end{eqnarray}
\end{proof}
\begin{rem}
For $n=2$ we obtain 
$$\int_{\mathbb{S}^1} \eta_1^{2p} |\eta_2|^q d \sigma(\eta)=  \B(p+\frac{1}{2},\frac{q+1}{2}).$$\\
\end{rem}
\noindent In the case $p>n$, the maximum of $\K_p(x,\ell_\gamma)$ is reached in the tangential direction, i.e., $\gamma=\frac{\pi}{2}$ and 
\begin{eqnarray}
  \K_p(x;\ell_{\pi/2})= \int_{\Sp} \left( 1+|x|^2-2|x|\eta_1 \right)^{(n-1)(q-1)} \left| \eta_2  \right|^q \, d \sigma(\eta).
\end{eqnarray}
To simplify the notation, we consider 
\begin{equation}
    u=\frac{2|x|}{1+|x|^2} \mbox{ and } \alpha=(n-1)(1-q).
\end{equation}
Thus we obtain
\begin{eqnarray}
    \K_p(x;\ell_{\pi/2})&=& (1+|x|^2)^{-\alpha}\int_{\Sp} \left( 1-u\eta_1 \right)^{-\alpha} \left| \eta_2  \right|^q \, d \sigma(\eta)\nonumber\\ 
    &=& (1+|x|^2)^{-\alpha} \sum_{k=0}^\infty \frac{(\alpha)_k\,u^k}{k!} \int_\Sp \eta_1^k |\eta_2|^q \sigma(\eta) \nonumber\\
    &=& \frac{n-2}{\pi} \Gamma(\frac{n-2}{2})\, \Gamma(\frac{q+1}{2})  (1+|x|^2)^{-\alpha} \sum_{k=0}^\infty  \frac{(\alpha)_{2k}}{(2k)!} \frac{\Gamma(k+\frac{1}{2})}{\Gamma(k+\frac{q+n}{2}) } u^{2k}.\nonumber
\end{eqnarray}
Since 
\begin{eqnarray}
   \sum_{k=0}^\infty  \frac{(\alpha)_{2k}}{(2k)!} \frac{\Gamma(k+\frac{1}{2})}{\Gamma(k+\frac{q+n}{2})}u^{2k} &=& \frac{\Gamma(\frac{1}{2}) }{\Gamma(\frac{q+n}{2})} \sum_{k=0}^\infty \frac{(\frac{\alpha}{2})_k (\frac{\alpha+1}{2})_k}{(\frac{q+n}{2})_k k!} u^{2k}\nonumber \\
   &=& \frac{\Gamma(\frac{1}{2}) }{\Gamma(\frac{q+n}{2})} \, {}_2F_1(\frac{\alpha}{2},\frac{\alpha+1}{2} ; \frac{q+n}{2};u^2).
\end{eqnarray}

\noindent Therefore,
\begin{eqnarray*}
    \K_p(x;\ell_{\pi/2})&=& \frac{n-2}{\sqrt{\pi}} \frac{\Gamma(\frac{n-2}{2}) \Gamma(\frac{q+1}{2})}{\Gamma(\frac{q+n}{2})}  (1+|x|^2)^{-\alpha} \, {}_2F_1(\frac{\alpha}{2},\frac{\alpha+1}{2}; \frac{q+n}{2};u^2).
     \end{eqnarray*}

Using the following well-known transformation formula due to Kummer

$$
{}_2F_1\left(a, a+\frac{1}{2};c; \frac{4v}{(1+v)^2} \right)= (1+v)^{2a}\, {}_2F_1\left(2a, 2a-c+1;c; v \right), 
$$
which is respectively a slight variation of the one  given in \cite[Section 15.3 (20) ]{AS}.\\

\noindent It yields 
\begin{eqnarray}
   \K_p(x;\ell_{\pi/2})&=&
\frac{n-2}{\sqrt{\pi}} \frac{\Gamma(\frac{n-2}{2}) \Gamma(\frac{q+1}{2})}{\Gamma(\frac{q+n}{2})}\, {}_2F_1\left(\alpha, \alpha-\frac{q+n}{2}+1;\frac{q+n}{2}; |x|^2 \right)\\
&=& \frac{n-2}{\sqrt{\pi}} \frac{\Gamma(\frac{n-2}{2})\, \Gamma(\frac{q+1}{2})}{\Gamma(\frac{q+n}{2})}\, {}_2F_1\left((n-1)(1-q), \frac{n}{2}+q(\frac{1}{2}-n);\frac{q+n}{2}; |x|^2 \right).\nonumber 
\end{eqnarray}

\end{document}